\documentclass[11pt]{article}
\usepackage[margin=1in]{geometry}
\usepackage[T1]{fontenc}
\usepackage{lmodern}
\usepackage{amsmath,amssymb,amsthm}
\usepackage{graphicx,tikz,needspace}
\usetikzlibrary{arrows.meta,positioning,calc}
\definecolor{XYInk}{HTML}{243746}
\definecolor{XYBlue}{HTML}{246A8D}
\definecolor{XYRust}{HTML}{B65A35}
\definecolor{XYMuted}{HTML}{CBD3D8}
\definecolor{XYGreen}{HTML}{4A785E}
\usepackage[colorlinks=true,linkcolor=blue,citecolor=blue,urlcolor=blue]{hyperref}

\newtheorem{theorem}{Theorem}[section]
\newtheorem{lemma}[theorem]{Lemma}
\newtheorem{proposition}[theorem]{Proposition}
\newtheorem{corollary}[theorem]{Corollary}
\theoremstyle{remark}
\newtheorem{remark}[theorem]{Remark}
\newcommand{\E}{\mathbb E}
\newcommand{\Z}{\mathbb Z}
\newcommand{\T}{\mathbb T}
\newcommand{\Cov}{\operatorname{Cov}}
\newcommand{\bnd}{\mathrm{bond}}
\newcommand{\sit}{\mathrm{site}}
\newcommand{\eff}{\mathrm{eff}}
\newcommand{\av}[1]{\langle #1\rangle}

\title{Correlation comparisons and critical curves\\
for disordered XY models}
\author{Yan Ru Pei\\\small\href{mailto:yanrpei@gmail.com}{\texttt{yanrpei@gmail.com}}}
\date{}
\hypersetup{
  pdftitle={Correlation comparisons and critical curves for disordered XY models},
  pdfauthor={Yan Ru Pei}
}

\begin{document}
\maketitle

\begin{abstract}
We prove a lower comparison for disorder-averaged ferromagnetic XY
correlations: independent nonnegative random couplings $K_e$ can be replaced
by $-\frac12\log\E e^{-2K_e}$. Ginibre's inequality makes each normalized
correlation convex in $1-e^{-2K_e}$, so conditional Jensen and thinning
compare Bernoulli bond densities; two star-wise Jensen passes handle site
dilution on bipartite graphs. Together with the phase-existence theorem of
Dario and Garban, these comparisons give local Lipschitz regularity of
both the averaged algebraic-phase and susceptibility-divergence thresholds
throughout the supercritical density intervals on $\Z^2$.
The clean finite-domain criterion of van Engelenburg and Lis also yields
explicit finite-size susceptibility lower bounds in a weak-dilution region.
For bounded centered bond noise, the possible suppression of the
algebraic-phase temperature is at most quadratic in the noise amplitude.
Finally, at finite positive thresholds we prove total-variation continuity
for bounded iid coupling laws whose essential supremum equals the common support cap,
with local Lipschitz control when the reference law has an atom at that cap.
\end{abstract}

\section{Introduction and main result}

The two-dimensional ferromagnetic XY model has an algebraic phase at low
temperature. A natural question is whether this phase survives a small
density of missing bonds or sites, even when the temperature is close to
the clean transition. More generally, one may ask how its critical
temperature varies with the density of the underlying percolation.
The issue concerns correlations at arbitrarily large distances, so
continuity of a Gibbs expectation on each fixed finite graph does not by
itself answer either question.

Dario and Garban \cite{DG25} proved that the averaged XY two-point function
has algebraic decay at sufficiently low temperature on every
supercritical Bernoulli site or bond percolation on $\Z^2$.
Their Theorem~1.3 thus gives a finite transition threshold at each
supercritical density. Their Open Question~4 asks whether weak dilution
preserves the algebraic phase at every inverse temperature strictly above
the clean critical point. Dario, van Engelenburg and Garban
\cite[Section~1.4, Open Questions~2--3]{DvEG26} ask this question again and
ask whether the averaged critical curve for bond dilution is continuous.

This note addresses the XY case of these questions by proving explicit
comparisons between different densities. The temperature loss in the
comparison tends to zero with the density loss, uniformly in the volume
and in the locations of the two spins. Besides weak-dilution stability,
the comparison gives local Lipschitz regularity of the critical curve,
for both bond and site dilution. The phase-existence input throughout the
supercritical interval is the theorem of Dario and Garban; the additional
ingredient is a transformed convexity consequence of Ginibre's correlation
inequality.

The same comparison yields two further consequences. First, the clean
finite-domain criterion of van Engelenburg and Lis \cite{vEL23} transfers
to explicit lower bounds on magnetic susceptibility in finite boxes,
and the Laplace-transform formula bounds the effect of weak centered
bond noise. Second, thinning bounded strengths and comparing their
scalar tails gives continuity of finite positive critical thresholds under
changes in the whole coupling law. Continuity holds when the reference
law has essential supremum equal to the common support cap, with a
Lipschitz bound when it has an atom there. These applications are proved
in Sections~\ref{sec:physical} and~\ref{sec:law}.

Differentiating one coupling and then averaging independent couplings has
classical Ising precedents. Alexander, Cesi, Chayes, Maes and Martinelli
\cite[Proposition~1.4 and Equation~(1.16)]{ACCMM98} use the relation
$p=1-e^{-2J}$ to prove concavity in the raw coupling and an upper
comparison for wired FK observables; see also \cite[Lemma~9.6]{GHM01}.
The exponential parameter and Jensen mechanism are standard.
Our use of them for XY correlations controls the additional second-harmonic
covariance by Ginibre's inequality, producing the lower comparison and
its consequences for dilution.

We first specify the critical curve. For a finite nearest-neighbor
subgraph $G=(V,E)$ of $\Z^2$ and couplings $J_e\ge0$, let
\begin{equation}\label{eq:gibbs}
 d\mu_{G,J}(\theta)
 =\frac1{Z_{G,J}}
   \exp\left(\sum_{e=\{u,v\}\in E}J_e\cos(\theta_u-\theta_v)\right)
   \prod_{v\in V}\frac{d\theta_v}{2\pi},
 \qquad \theta\in\T^V,
\end{equation}
where $\T=\mathbb R/(2\pi\mathbb Z)$. We use free boundary conditions.
In the bond model, $J_e=\beta\omega_e$ with independent
Bernoulli($p$) variables $\omega_e$. In the site model,
$J_{\{u,v\}}=\beta\eta_u\eta_v$ with independent Bernoulli($p$)
variables $\eta_v$. All vertices retain their angle variables, including
vacant sites, which are isolated and carry independent Haar angles.
Thus a two-point correlation between distinct vertices with a vacant
endpoint is zero.

For $\star\in\{\bnd,\sit\}$, write
\begin{equation}\label{eq:F}
 F_\star(p,\beta;x,y)
 =\E_p\av{\cos(\theta_x-\theta_y)}_{\beta,\star}
\end{equation}
for the free infinite-volume limit of the averaged correlation.
Each Gibbs expectation is normalized before averaging over the disorder.
Existence of this limit follows from Ginibre monotonicity; the details
needed for the comparisons are given in Section~\ref{sec:dilution}.
Define
\begin{equation}\label{eq:critical}
 \beta_{c,\star}(p)
 =\inf\left\{\beta\ge0:
 \begin{array}{l}
 \text{there exist }c>0\text{ and }s\in[0,\infty)\text{ such that}\\[-2pt]
 F_\star(p,\beta;0,x)\ge c(1+|x|)^{-s}
 \quad\text{for every }x\in\Z^2\setminus\{0\}
 \end{array}\right\},
\end{equation}
with $\inf\varnothing=\infty$.
At each finite positive $\beta$, a polynomial upper bound follows by
comparison with the clean model and the McBryan--Spencer bound, as used
in \cite[Equation~(4.9)]{DG25}. Hence the lower bound in
\eqref{eq:critical} specifies the averaged algebraic phase in
\cite[Equation~(1.18)]{DvEG26}. At $p=1$, the clean
exponential/algebraic dichotomy gives
$\beta_{c,\bnd}(1)=\beta_{c,\sit}(1)=\beta_{\mathrm{BKT}}$;
see \cite[Section~1.2]{DG25}.

For $\kappa>0$, set
\begin{equation}\label{eq:H}
 H_\kappa(t)=1-e^{-2\kappa t},\qquad
 B_\kappa(r,t)=H_\kappa^{-1}\bigl(rH_\kappa(t)\bigr)
 =-\frac1{2\kappa}\log\bigl(1-r(1-e^{-2\kappa t})\bigr),
\end{equation}
where $t\ge0$ and $r\in[0,1]$.

\Needspace{14\baselineskip}
\begin{theorem}[Regularity of the averaged critical curve]\label{thm:critical}
For bond dilution on $\Z^2$, put $(\alpha,\kappa)=(1,1)$;
for site dilution, put $(\alpha,\kappa)=(2,4)$.
Let $p_{c,\star}$ be the corresponding Bernoulli percolation threshold.
On $(p_{c,\star},1]$, the function $\beta_{c,\star}$ is finite and
nonincreasing, while
\begin{equation}\label{eq:critical-monotone}
 p\longmapsto p^\alpha H_\kappa\bigl(\beta_{c,\star}(p)\bigr)
\end{equation}
is nondecreasing. In particular, if
$p_{c,\star}<a\le p\le q\le b\le1$ and
$M=\beta_{c,\star}(a)$, then
\begin{equation}\label{eq:lipschitz}
 0\le\beta_{c,\star}(p)-\beta_{c,\star}(q)
 \le \frac{\alpha e^{2\kappa M}}{2\kappa a}(q-p).
\end{equation}
Consequently $\beta_{c,\star}$ is locally Lipschitz on the
supercritical interval and continuous from the left at $p=1$.
\end{theorem}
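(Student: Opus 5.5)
The plan is to deduce everything from a single comparison inequality between densities, proved in Section~\ref{sec:dilution}:
\begin{equation*}
 F_\star(p,\beta;x,y)\ \ge\ F_\star\bigl(q,B_\kappa((p/q)^\alpha,\beta);x,y\bigr),\qquad 0<p\le q\le1.
\end{equation*}
This inequality is uniform in $x,y$ and in the finite volume, and the critical-curve statements then follow by bookkeeping.

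\emph{Bond case.} Ginibre's inequality makes the normalized finite-volume correlation $\av{\cos(\theta_x-\theta_y)}_{G,J}$ convex in each variable $u_e=1-e^{-2J_e}$. With $J_e=\beta\omega_e$ we have $u_e=\omega_eH_1(\beta)$. I would couple $\omega^{(p)}_e=\omega^{(q)}_e\xi_e$, where the $\xi_e$ are independent Bernoulli($p/q$) variables. Conditioning on $\omega^{(q)}$ and applying conditional Jensen coordinatewise (in finite volume, to a multiaffine-in-$u$ convex function) replaces $u_e$ by $\omega^{(q)}_e(p/q)H_1(\beta)$. This is exactly the density-$q$ model at coupling $B_1(p/q,\beta)$. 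Passing to the free infinite-volume limit, which exists by Ginibre monotonicity, preserves the inequality.

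\emph{Site case.} I would use two star-wise Jensen passes on the two sublattices of the bipartite graph $\Z^2$, each thinning $\eta$ by a factor $p/q$. For a single star, the relevant convex parameter is $1-e^{-2\cdot4\beta}$, which is where $\kappa=4$ comes from, and the two passes give $\alpha=2$. I expect the main obstacle to be the convexity-along-a-star step: the whole star switches on and off together, so joint convexity in the four edge variables along the ray $\eta_v\mapsto$ ``all on'' must be checked. This uses the second-harmonic covariance term that Ginibre controls.

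Next come finiteness and monotonicity. Finiteness of $\beta_{c,\star}$ on $(p_{c,\star},1]$ is Dario--Garban's Theorem~1.3. By Ginibre, correlations increase with every coupling. Hence $F_\star$ is nondecreasing in $\beta$, so the set in \eqref{eq:critical} is an up-interval. Likewise $F_\star$ is nondecreasing in $p$, by a monotone coupling of the Bernoulli variables, so $\beta_{c,\star}$ is nonincreasing.

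For the monotonicity of \eqref{eq:critical-monotone}, take $p\le q$ and any $\beta$ with $p^\alpha H_\kappa(\beta)>q^\alpha H_\kappa(\beta_{c,\star}(q))$. Then $\beta'=B_\kappa((p/q)^\alpha,\beta)$ satisfies $H_\kappa(\beta')>H_\kappa(\beta_{c,\star}(q))$, so $\beta'>\beta_{c,\star}(q)$ and $(q,\beta')$ has a polynomial lower bound. By the comparison, so does $(p,\beta)$, hence $\beta_{c,\star}(p)\le\beta$. Letting $\beta$ decrease to the threshold gives $p^\alpha H_\kappa(\beta_{c,\star}(p))\le q^\alpha H_\kappa(\beta_{c,\star}(q))$.

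For \eqref{eq:lipschitz}, write $h_p=H_\kappa(\beta_{c,\star}(p))\in(0,1]$ and similarly $h_q$. The monotonicity gives $\log h_p-\log h_q\le\alpha\log(q/p)\le\alpha(q-p)/a$. Since $h_p\le1$ and $h_p\ge h_q$, we get $h_p-h_q\le h_p(\log h_p-\log h_q)\le\alpha(q-p)/a$. Both thresholds lie in $[0,M]$ because $\beta_{c,\star}$ is nonincreasing and $a\le p$. On $[0,M]$ we have $H_\kappa'(t)=2\kappa e^{-2\kappa t}\ge2\kappa e^{-2\kappa M}$, so the mean value theorem converts this into \eqref{eq:lipschitz}. (The degenerate case $h_q=0$ would mean $\beta_{c,\star}(q)=0$, which is excluded by comparison with the clean model at high temperature; in any case it is handled by the same bound after taking limits.) Local Lipschitz continuity follows, and taking $q=b=1$ gives continuity from the left at $p=1$.
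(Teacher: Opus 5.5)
Your proposal is correct and follows the paper's own route: the finite-volume density comparison via coordinatewise Jensen (bond) or two star-wise Jensen passes (site) in the variable $H_\kappa$, then the free limit by Ginibre monotonicity, the order argument, and the conversion via $H_\kappa'\ge2\kappa e^{-2\kappa M}$ on $[0,M]$; your logarithmic estimate in place of $1-r^\alpha\le\alpha(1-r)$ is an inessential variant. Two small corrections: the correlation is only separately convex in $u$, not multiaffine, and convexity is all Jensen needs; the star step you flag is exactly Lemma~\ref{lem:convex}, where $f''+2\kappa f'=\Cov_t(O_a,S^2)+2(\kappa-\av{S}_t)f'\ge0$ uses $\sum_e w_e\le\kappa$. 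Also, when $q^\alpha H_\kappa(\beta_{c,\star}(q))\ge p^\alpha$ no admissible $\beta$ exists in your order argument, so that case must be handled directly using $H_\kappa<1$, as the paper does.
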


\begin{corollary}[Weak-dilution stability]\label{cor:weak}
Use the parameters $(\alpha,\kappa)$ from Theorem~\ref{thm:critical}.
If $p\in(0,1]$ and $\beta\ge0$ satisfy
\begin{equation}\label{eq:weak}
 p^\alpha H_\kappa(\beta)
 \ge H_\kappa(\beta_{\mathrm{BKT}}),
\end{equation}
then the free infinite-volume averaged correlation obeys
\begin{equation}\label{eq:explicit-lower}
 F_\star(p,\beta;x,y)\ge\frac1{8\|x-y\|_\infty}
 \qquad(x\ne y).
\end{equation}
In particular, the algebraic phase survives all sufficiently weak dilution
at every $\beta>\beta_{\mathrm{BKT}}$.
\end{corollary}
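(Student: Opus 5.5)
The proof rests on the lower comparison that the paper develops in Section~\ref{sec:dilution}. For bond dilution it reads
\[
F_\bnd(p,\beta;x,y)\ \ge\ \av{\cos(\theta_x-\theta_y)}^{\mathrm{clean}}_{\beta_\eff},
\qquad H_1(\beta_\eff)=pH_1(\beta),
\]
and the site case is identical with $p^2$ and $\kappa=4$. The bond version comes from the abstract's recipe: replace $K_e=\beta\omega_e$ by $-\frac12\log\E e^{-2K_e}=-\frac12\log(1-p(1-e^{-2\beta}))=B_1(p,\beta)$, which is exactly $H_1^{-1}(pH_1(\beta))$. The site version comes from two star-wise Jensen passes on the bipartite lattice. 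We take this comparison in finite volume and uniform in the volume, so that it passes to the free infinite-volume limit by Ginibre monotonicity. More generally, the same thinning argument gives the comparison between two densities $p\le q$:
\[
F_\star(p,\beta;x,y)\ \ge\ F_\star\bigl(q,\beta';x,y\bigr)\quad\text{whenever}\quad q^\alpha H_\kappa(\beta')\le p^\alpha H_\kappa(\beta).
\]
The reason is that Bernoulli($p$) thinning of a Bernoulli($q$) configuration is Bernoulli($p$) at density $q\cdot(p/q)$. Conditionally on the $q$-configuration, one applies Jensen to the thinning, using convexity in $1-e^{-2K_e}$ from Ginibre. For sites the same is done star-wise.

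\textbf{Proof of Corollary~\ref{cor:weak}.} Choose $\beta'\le\beta_{\mathrm{BKT}}$ or $\beta'$ just below, defined by $H_\kappa(\beta')=p^\alpha H_\kappa(\beta)\ge H_\kappa(\beta_{\mathrm{BKT}})$, so that $\beta'\ge\beta_{\mathrm{BKT}}$. The comparison at $q=1$ gives
\[
F_\star(p,\beta;x,y)\ \ge\ \av{\cos(\theta_x-\theta_y)}^{\mathrm{clean}}_{\beta'}\ \ge\ \av{\cos(\theta_x-\theta_y)}^{\mathrm{clean}}_{\beta_{\mathrm{BKT}}},
\]
where the second inequality is Ginibre monotonicity in $\beta$. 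It remains to use the clean bound at the critical point, $\av{\cos(\theta_x-\theta_y)}^{\mathrm{clean}}_{\beta}\ge 1/(8\|x-y\|_\infty)$ for all $\beta\ge\beta_{\mathrm{BKT}}$. This bound should come from the van Engelenburg--Lis finite-domain criterion cited in the introduction: at and above $\beta_{\mathrm{BKT}}$ the spin-wave/delocalization bound holds, which gives $\ge 1/(8n)$ at distance $n$ in free boundary conditions. Closedness at $\beta_{\mathrm{BKT}}$ follows by left-continuity of the increasing free correlations in $\beta$.

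\textbf{Main obstacle.} The step I expect to be hardest is this critical-point input. The infimum in \eqref{eq:critical} need not a priori be attained, so I would cite \cite{vEL23} for the explicit $1/(8\|x-y\|_\infty)$ bound valid for all $\beta\ge\beta_{\mathrm{BKT}}$. If that citation only covers $\beta>\beta_{\mathrm{BKT}}$, I would fall back on lower semicontinuity. Each finite-volume correlation is continuous in $\beta$, and the limit is a supremum of these continuous functions, hence lower semicontinuous and increasing, so the bound passes to $\beta_{\mathrm{BKT}}$.

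\textbf{Survival of the algebraic phase.} For $\beta>\beta_{\mathrm{BKT}}$, we have $H_\kappa(\beta)>H_\kappa(\beta_{\mathrm{BKT}})$ because $H_\kappa$ is strictly increasing. Hence \eqref{eq:weak} holds for all $p\ge\bigl(H_\kappa(\beta_{\mathrm{BKT}})/H_\kappa(\beta)\bigr)^{1/\alpha}<1$. By \eqref{eq:explicit-lower}, this gives polynomial decay with exponent $s=1$, so $\beta_{c,\star}(p)\le\beta$ for all such $p$.
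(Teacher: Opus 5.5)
Your main line is the paper's proof. With $\beta'=B_\kappa(p^\alpha,\beta)$, hypothesis \eqref{eq:weak} says $\beta'\ge\beta_{\mathrm{BKT}}$. The comparison \eqref{eq:infinite-comparison} with $q=1$ bounds $F_\star(p,\beta;x,y)$ below by the clean correlation at $\beta'$. The paper then cites the clean bound $1/(8\|x-y\|_\infty)$ from \cite[Theorem~1(ii) and Section~6]{vEL23}, valid for every $b\ge\beta_{\mathrm{BKT}}$ including the endpoint. Your extra Ginibre step down to $\beta_{\mathrm{BKT}}$ is harmless.

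What does not hold up is your justification of the endpoint. This case matters, because \eqref{eq:weak} allows equality. Write $C_\beta(x,y)=\sup_G C_{G,\beta}(x,y)$ for the free infinite-volume correlation. Each $C_{G,\beta}$ is continuous and nondecreasing in $\beta$, so $C_\beta$ is lower semicontinuous and nondecreasing, i.e.\ left-continuous. That is the wrong direction for what you need. A lower bound known for all $\beta>\beta_{\mathrm{BKT}}$ controls only the right limit $\lim_{\beta\downarrow\beta_{\mathrm{BKT}}}C_\beta(x,y)$. That limit could strictly exceed $C_{\beta_{\mathrm{BKT}}}(x,y)$ if the free correlation jumps there. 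Passing the bound down to the endpoint would require right-continuity (upper semicontinuity), which a supremum of continuous functions need not have. So neither your remark that closedness follows from left-continuity nor your fallback recovers the equality case.

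The endpoint has to be handled in finite volume, as the paper indicates around \eqref{eq:boundary-criterion}. For a fixed finite domain $G$, the boundary sum $\sum_{y\in\partial G}C_{G,b}(x,y)$ is continuous in $b$. A value below $1$ at $b=\beta_{\mathrm{BKT}}$ would therefore persist slightly above $\beta_{\mathrm{BKT}}$. By Lieb--Rivasseau this would give exponential decay there, contradicting the clean dichotomy. Hence the criterion holds at the endpoint, and the cited clean theorem supplies the pointwise bound there. Replace your semicontinuity remarks with an appeal to that endpoint statement, and the proof is complete.
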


The equality case in \eqref{eq:weak} follows from the clean endpoint
theorem of \cite[Theorem~1(ii) and Section~6]{vEL23}. It concerns the
boundary of this explicit sufficient region, not attainment of the
unknown diluted threshold in \eqref{eq:critical}.

The proof rests on the finite-graph estimate
\begin{equation}\label{eq:density-overview}
 F_{G,\star}(p,\beta;x,y)
 \ge F_{G,\star}\bigl(q,B_\kappa((p/q)^\alpha,\beta);x,y\bigr),
 \qquad 0<p\le q\le1.
\end{equation}
For a single variable coupling $t$, write
$f(t)=\av O_t$ for an integer-charge cosine $O$, and let $b=\cos(\theta_u-\theta_v)$.
Ginibre's inequality gives $f'\ge0$ and
\[
 f''+2f'
 =\Cov_t(O,b^2)+2(1-\av b_t)f'\ge0.
\]
The second harmonic in $b^2$ is covered by the same inequality.
Thus $f$ is convex in $H_1(t)$, and Jensen gives a deterministic lower
comparison after averaging one random coupling. Applying it edge by edge
proves the bond comparison. A star version of this observation, applied
successively to the two color classes, proves the site comparison.
After the free-volume limit, the critical-curve conclusions follow from
the order properties of the phase in \eqref{eq:critical}.

\section{A convexity consequence of Ginibre's inequality}
\label{sec:convexity}

We now allow $G=(V,E)$ to be any finite graph. Orient each edge
arbitrarily and write $b_e=\delta_u-\delta_v\in\Z^V$ for its incidence
vector. For $a\in\Z^V$, set
$O_a(\theta)=\cos(a\cdot\theta)$ and
$C_a(J)=\av{O_a}_{G,J}$.
The form of Ginibre's inequality used below is
\begin{equation}\label{eq:ginibre}
 C_a(J)\ge0,\qquad
 \Cov_{G,J}(O_a,O_c)\ge0
 \quad\text{for all }a,c\in\Z^V,\ J\ge0.
\end{equation}
This is the classical correlation inequality of \cite{Gin70}; see also
\cite[Theorem~2.3]{DG25}, whose general cosine interaction formulation
includes arbitrary integer charges. In particular, \eqref{eq:ginibre}
applies to the higher harmonics produced by multiplying edge cosines.

\begin{lemma}[Transformed convexity]\label{lem:convex}
Fix nonnegative couplings $J^0_e$ and numbers $w_e\ge0$ with
$\sum_e w_e\le\kappa$, where $\kappa>0$. Set
\[
 J_e(t)=J^0_e+t w_e,\qquad
 S=\sum_e w_e\cos(b_e\cdot\theta),\qquad
 f(t)=C_a(J(t)),\quad t\ge0.
\]
Then $f'\ge0$, $f''+2\kappa f'\ge0$, and the function
$h\mapsto f(H_\kappa^{-1}(h))$ is convex on $[0,1)$.
\end{lemma}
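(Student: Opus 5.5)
We compute the derivatives of $f$ directly from the Gibbs measure \eqref{eq:gibbs}. Since $J_e(t)=J^0_e+tw_e$, differentiating the Hamiltonian in $t$ gives $S$, so
\[
 f'(t)=\Cov_t(O_a,S)=\sum_e w_e\Cov_t\bigl(O_a,O_{b_e}\bigr),\qquad
 f''(t)=\Cov_t(O_a,S^2)-2\av{S}_t\Cov_t(O_a,S),
\]
where the second formula is the standard third-cumulant identity $\frac{d}{dt}\Cov_t(O_a,S)=\av{O_aS^2}-\av{O_a}\av{S^2}-2\av S\bigl(\av{O_aS}-\av{O_a}\av S\bigr)$, obtained by differentiating each expectation. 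The first display together with $w_e\ge0$ and \eqref{eq:ginibre} gives $f'\ge0$ at once.

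For the second claim, we rewrite
\[
 f''+2\kappa f'=\Cov_t(O_a,S^2)+2\bigl(\kappa-\av S_t\bigr)f'.
\]
The second term is nonnegative because $f'\ge0$ and $\av S_t\le\sum_e w_e\le\kappa$ (as $|\cos|\le1$ and $w_e\ge0$). For the first term, we expand $S^2$ into integer-charge cosines: $\cos(b_e\cdot\theta)\cos(b_{e'}\cdot\theta)=\frac12\bigl(O_{b_e+b_{e'}}+O_{b_e-b_{e'}}\bigr)$, so
\[
 S^2=\tfrac12\sum_{e,e'}w_ew_{e'}\bigl(O_{b_e+b_{e'}}+O_{b_e-b_{e'}}\bigr),
\]
which is a nonnegative combination of $O_c$ with $c\in\Z^V$. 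Hence $\Cov_t(O_a,S^2)\ge0$ by \eqref{eq:ginibre} applied to these higher harmonics; this is exactly why the general integer-charge form of Ginibre's inequality is needed. This step is the main point of the lemma; the only care needed is that the sign of $-2\av S f'$ is absorbed by $2\kappa f'$ using the bound $\sum w_e\le\kappa$.

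Finally, put $g(h)=f(H_\kappa^{-1}(h))$ with $t=H_\kappa^{-1}(h)=-\frac1{2\kappa}\log(1-h)$, so $dt/dh=\frac{1}{2\kappa(1-h)}=\frac{e^{2\kappa t}}{2\kappa}$. By the chain rule, $g'(h)=f'(t)\,e^{2\kappa t}/(2\kappa)$, and differentiating once more,
\[
 g''(h)=\frac{e^{4\kappa t}}{4\kappa^2}\bigl(f''(t)+2\kappa f'(t)\bigr)\ge0 .
\]
Smoothness of $f$ in $t$ is clear on a finite graph (the partition function is a positive real-analytic function of $t$), so $g$ is $C^2$ with $g''\ge0$ on $[0,1)$, hence convex.
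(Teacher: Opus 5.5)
Your proof is correct and follows essentially the same route as the paper. Both use the same derivative formulas $f'=\Cov_t(O_a,S)$ and $f''=\Cov_t(O_a,S^2)-2\av S_t f'$, and both expand $S^2$ into integer-charge cosines so that Ginibre applies. Both then absorb $-2\av S_t f'$ using $\av S_t\le\kappa$, and the chain-rule computation is identical, since your factor $e^{4\kappa t}$ equals the paper's $(1-h)^{-2}$.
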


\begin{proof}
Differentiation under the finite-dimensional Gibbs integral gives
\begin{equation}\label{eq:derivatives}
 f'=\Cov_t(O_a,S),\qquad
 f''=\Cov_t(O_a,S^2)-2\av S_t f'.
\end{equation}
The first derivative is nonnegative by \eqref{eq:ginibre}. Moreover,
\[
 S^2=\frac12\sum_{e,f}w_ew_f
 \left[\cos\bigl((b_e+b_f)\cdot\theta\bigr)
       +\cos\bigl((b_e-b_f)\cdot\theta\bigr)\right],
\]
so $\Cov_t(O_a,S^2)\ge0$ by the same inequality. Since
$\av S_t\le\sum_e w_e\le\kappa$, we obtain
\begin{equation}\label{eq:key}
 f''+2\kappa f'
 =\Cov_t(O_a,S^2)+2(\kappa-\av S_t)f'\ge0.
\end{equation}
For $h=H_\kappa(t)$, the chain rule gives
\[
 \frac{d^2}{dh^2}f(H_\kappa^{-1}(h))
 =\frac{f''(t)+2\kappa f'(t)}{4\kappa^2(1-h)^2}\ge0.
\]
\end{proof}

\begin{remark}[Ising antecedent]
For an Ising spin-product correlation with one bond $t$ varying,
$b=\sigma_u\sigma_v$ satisfies $b^2=1$.
The Griffiths inequalities therefore give
$f''+2f'=2(1-\av b_t)f'\ge0$.
Thus convexity in $1-e^{-2t}$, and the corresponding effective-coupling
lower comparison by Jensen, follow directly in the Ising case.
In the XY calculation, the only extra covariance term is controlled by
the second harmonic in $b^2$. This observation does not assert priority
for an Ising comparison.
\end{remark}

\begin{theorem}[Independent random couplings]\label{thm:random}
Let $(K_e)_{e\in E}$ be independent nonnegative random variables,
each finite almost surely. Define
\begin{equation}\label{eq:effective}
 K_e^\eff=-\frac12\log\E e^{-2K_e}.
\end{equation}
For every $a\in\Z^V$,
\begin{equation}\label{eq:random-comparison}
 \E C_a(K)\ge C_a(K^\eff).
\end{equation}
\end{theorem}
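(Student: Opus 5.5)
The plan is to replace the couplings one edge at a time, using Lemma~\ref{lem:convex} with $\kappa=1$ and a single varying edge together with conditional Jensen. Enumerate the edges as $e_1,\dots,e_m$. For $0\le k\le m$, let $K^{(k)}$ be the coupling vector with $K^{(k)}_{e_j}=K^\eff_{e_j}$ for $j\le k$ and $K^{(k)}_{e_j}=K_{e_j}$ for $j>k$. Then $K^{(0)}=K$ and $K^{(m)}=K^\eff$. It suffices to prove $\E C_a(K^{(k-1)})\ge\E C_a(K^{(k)})$ for each $k$ and to chain these inequalities. Note that each $K^\eff_e$ lies in $[0,\infty)$: since $K_e\ge0$, we have $\E e^{-2K_e}\in(0,1]$, and positivity holds because $K_e<\infty$ almost surely.

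For the single step, fix $k$ and condition on all couplings except $K_{e_k}$. By independence, the conditional law of $K_{e_k}$ is its marginal law. Put $J^0_e=K^{(k)}_e$ for $e\ne e_k$ and $J^0_{e_k}=0$, take $w_{e_k}=1$ and all other $w_e=0$, so that $\sum_e w_e=1=\kappa$. Let $f(t)=C_a(J^0+t\,\delta_{e_k})$. Lemma~\ref{lem:convex} shows that $g(h)=f(H_1^{-1}(h))$ is convex on $[0,1)$. Set $X=H_1(K_{e_k})=1-e^{-2K_{e_k}}$, which lies in $[0,1)$ almost surely. Its mean is $\E X=1-\E e^{-2K_{e_k}}=H_1(K^\eff_{e_k})\in[0,1)$. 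Jensen's inequality then gives $\E g(X)\ge g(\E X)$, that is,
\[
\E\bigl[C_a(K^{(k-1)})\,\big|\,(K_e)_{e\ne e_k}\bigr]\ge C_a(K^{(k)}).
\]
Taking expectations completes the step.

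The one technical point is the justification of Jensen. We need $g$ to be measurable and integrable, and we need convexity on the whole range of $X$, including values arbitrarily close to $1$. Integrability is automatic because $0\le C_a\le1$, using Ginibre for the lower bound and $|\cos|\le1$ for the upper bound. Convexity on the open interval $[0,1)$ is enough for Jensen whenever $X\in[0,1)$ almost surely, and this holds since $K_{e_k}$ is finite almost surely. Continuity of $f$ in $t$ follows from the finite-dimensional Gibbs integral, so $g$ is continuous and joint measurability in the conditioning variables is clear.

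I expect this Jensen step to be the only real obstacle, and it is settled by Lemma~\ref{lem:convex}. The remainder is bookkeeping: checking that the conditional-expectation argument respects independence, so that the replaced couplings $K^\eff_{e_j}$ with $j<k$ are deterministic and the remaining $K_{e_j}$ with $j>k$ are independent of $K_{e_k}$.
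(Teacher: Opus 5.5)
Your proof is correct and follows the paper's argument: Lemma~\ref{lem:convex}, with a single varying edge and $\kappa=1$, gives convexity in $h_e=H_1(J_e)$, and conditional Jensen then replaces the couplings one edge at a time, with independence keeping the other coordinates' laws fixed. The paper first proves the bound for bounded $K_e$ and then passes to general $K_e$ through the truncations $K_e\wedge R$. You instead apply Jensen directly on $[0,1)$, which is legitimate because $H_1(K_e)<1$ almost surely and the convex function takes values in $[0,1]$.
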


\begin{proof}
Varying just one edge coupling in Lemma~\ref{lem:convex}, with
$w_e=1$ and $\kappa=1$, shows that $C_a$ is separately convex in
$h_e=H_1(J_e)$. Suppose first that the $K_e$ are bounded.
Conditional Jensen replaces $H_1(K_e)$ by its expectation, one edge at a
time. Independence ensures that each replacement leaves the laws of the
remaining coordinates unchanged. This proves
\[
 \E C_a(K)\ge
 C_a\bigl((H_1^{-1}(\E H_1(K_e)))_{e\in E}\bigr),
\]
which is \eqref{eq:random-comparison}.
For general $K_e$, apply the bounded result to $K_e\wedge R$.
As $R\to\infty$, bounded convergence gives convergence of the averaged
correlations, since $0\le C_a\le1$. Also
$\E e^{-2(K_e\wedge R)}\to\E e^{-2K_e}>0$, and continuity in the
finitely many deterministic couplings gives the right-hand limit.
\end{proof}

In particular, an independent Bernoulli($r$) coupling $K_e=t\zeta_e$
has $K_e^\eff=B_1(r,t)$. No moment assumption on $K_e$ is needed in
Theorem~\ref{thm:random}. The independence hypothesis concerns the
couplings themselves; site dilution requires the additional argument below.

\section{Dilution and the free-volume limit}\label{sec:dilution}

For finite $G$, write $F_{G,\star}(p,\beta;a)$ for the expectation of
$C_a$ under bond or site dilution as in \eqref{eq:gibbs}.
For a two-point charge, we also use the endpoint notation
$F_{G,\star}(p,\beta;x,y)$.

\begin{proposition}[Comparison between densities]\label{prop:density}
Let $0<p\le q\le1$ and $\beta\ge0$.
On every finite graph,
\begin{equation}\label{eq:bond}
 F_{G,\bnd}(p,\beta;a)
 \ge F_{G,\bnd}\bigl(q,B_1(p/q,\beta);a\bigr).
\end{equation}
If $G$ is bipartite with maximum degree at most $\Delta\ge1$, then
\begin{equation}\label{eq:site}
 F_{G,\sit}(p,\beta;a)
 \ge F_{G,\sit}\bigl(q,B_\Delta((p/q)^2,\beta);a\bigr).
\end{equation}
These bounds hold for every $a\in\Z^V$.
\end{proposition}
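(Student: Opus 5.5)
For the bond inequality \eqref{eq:bond} I would use thinning. Realize Bernoulli($p$) bonds as $\omega_e=\omega'_e\zeta_e$, where $\omega'_e\sim$ Bernoulli($q$) and $\zeta_e\sim$ Bernoulli($p/q$), all independent. Condition on $\omega'$. The couplings $K_e=\beta\omega'_e\zeta_e$ are then independent given $\omega'$. On edges with $\omega'_e=1$ we have $K_e=\beta\zeta_e$ with effective coupling $B_1(p/q,\beta)$. On edges with $\omega'_e=0$ the coupling is $0$, and so is the effective one. Theorem~\ref{thm:random} applied conditionally gives $\E[C_a(\beta\omega)\mid\omega']\ge C_a(B_1(p/q,\beta)\omega')$. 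Averaging over $\omega'$ yields exactly $F_{G,\bnd}(q,B_1(p/q,\beta);a)$.

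For the site inequality \eqref{eq:site} I would also thin, taking $\eta_v=\eta'_v\xi_v$ with $\eta'\sim$ Bernoulli($q$) and $\xi\sim$ Bernoulli($r$), $r=p/q$. Condition on $\eta'$. The couplings $J_{uv}=\beta\eta'_u\eta'_v\xi_u\xi_v$ are not independent across edges, so Theorem~\ref{thm:random} does not apply. Instead I would use the bipartition $V=V_1\sqcup V_2$.

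\textbf{First pass (over $V_1$, with $\xi|_{V_2}$ frozen).} Each $v\in V_1$ controls a star of at most $\Delta$ edges, and distinct stars in $V_1$ are disjoint because $V_1$ is independent. Fix $v$ and every other variable. Then $J_e=J^0_e+t w_e$ on the star, with $t=\beta\xi_v\in\{0,\beta\}$ and $w_e=\eta'_v\eta'_u\xi_u\le1$, so $\sum_e w_e\le\Delta$. Lemma~\ref{lem:convex} with $\kappa=\Delta$ shows that $C_a$ is convex in $H_\Delta(t)$. Since $\E H_\Delta(\beta\xi_v)=rH_\Delta(\beta)$, conditional Jensen replaces $\beta\xi_v$ by $\beta_1=B_\Delta(r,\beta)$. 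I would do this vertex by vertex in $V_1$, using independence of the $\xi_v$. The result is a model with edge couplings $\beta_1\eta'_u\eta'_v\xi_u$ for $u\in V_2$.

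\textbf{Second pass (over $V_2$).} Stars of $V_2$ are again disjoint. At $u\in V_2$ the coupling is now $t=\xi_u$ times the constant $\beta_1$, and the weights are again $w_e\le1$ with $\sum_e w_e\le\Delta$. The same Jensen step replaces $\xi_u$ by the effective value, giving
\[
H_\Delta(\beta_2)=rH_\Delta(\beta_1)=r^2H_\Delta(\beta),\qquad\text{i.e. } \beta_2=B_\Delta(r^2,\beta).
\]
The resulting couplings are $\beta_2\eta'_u\eta'_v$. Averaging over $\eta'$ gives $F_{G,\sit}(q,\beta_2;a)$.

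The main obstacle is the second pass. There the varying parameter multiplies a coupling that already has the reduced value $\beta_1$, so I must parametrize it correctly. I would write $J_e=t w_e$ with $t\in\{0,\beta_1\}$ and apply the lemma in the variable $t$, so that the Jensen step uses $H_\Delta$ evaluated at $\beta_1$. I would also verify that the lemma allows $J^0$ to contain all the non-star couplings and that the $\kappa$-bound uses $\sum_e w_e\le\Delta$ and not the value of $\beta$. Finally, the general statement for arbitrary $a$ needs no change, since Lemma~\ref{lem:convex} holds for every charge.
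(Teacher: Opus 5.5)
Your proposal is correct and follows the paper's proof: thinning plus conditional use of Theorem~\ref{thm:random} for bonds, and for sites two successive star-wise Jensen passes over the bipartition classes, giving $B_\Delta(r,B_\Delta(r,\beta))=B_\Delta(r^2,\beta)$. The only difference is bookkeeping: the paper first proves the clean ($q=1$) site comparison and applies it on the subgraph left after deleting edges with a $q$-vacant endpoint, whereas you fold $\eta'$ directly into the star weights $w_e$, which is equivalent.
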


\begin{proof}
For bonds, sample a Bernoulli($q$) graph and independently retain each
present edge with probability $r=p/q$. Conditional on the first graph,
Theorem~\ref{thm:random} replaces each surviving random coupling
$\beta\zeta_e$ by $B_1(r,\beta)$. Averaging over the first graph
gives \eqref{eq:bond}.

For sites, first prove the comparison with the clean graph at retention
$r\in(0,1]$. Let $V=A\sqcup B$ be a bipartition. Conditional on the
site variables $(\eta_v)_{v\in B}$, assign a coupling parameter $t_u$
to each $u\in A$, so the interaction is
\[
 \sum_{u\in A}t_u S_u,\qquad
 S_u=\sum_{v:\{u,v\}\in E}\eta_v\cos(\theta_u-\theta_v).
\]
Every $S_u$ has nonnegative coefficients summing to at most $\Delta$.
Lemma~\ref{lem:convex} and successive Jensen inequalities in the
independent parameters $t_u=\beta\eta_u$ replace all of them by
$t_1=B_\Delta(r,\beta)$. After this replacement the interaction is
\[
 t_1\sum_{v\in B}\eta_v
       \sum_{u:\{u,v\}\in E}\cos(\theta_u-\theta_v).
\]
Apply the same argument to the independent parameters $t_1\eta_v$,
$v\in B$. Each star again has at most $\Delta$ terms. The resulting
deterministic edge coupling is
\[
 t_2=B_\Delta(r,t_1)
 =B_\Delta(r,B_\Delta(r,\beta))
 =B_\Delta(r^2,\beta),
\]
where the last equality follows directly from \eqref{eq:H}.
This proves the clean comparison, also when $\beta=0$.

Now sample independent Bernoulli($q$) site variables $\xi_v$, followed
by independent Bernoulli($p/q$) site variables $\zeta_v$.
Their products $\xi_v\zeta_v$ are independent Bernoulli($p$) variables.
Conditional on $\xi$, remove all edges with an unoccupied endpoint and
retain all vertex integrations. The remaining graph is bipartite with
maximum degree at most $\Delta$. Apply the clean site comparison on
this graph and average over $\xi$. This is \eqref{eq:site}.
\end{proof}

The two-pass mechanism is illustrated in Figure~\ref{fig:site}.

\begin{figure}[tbp]
\centering
\resizebox{.98\linewidth}{!}{%
\begin{tikzpicture}[font=\fontsize{9}{11}\selectfont,text=XYInk,>=Stealth,
  avertex/.style={circle,draw=XYInk,fill=XYInk,inner sep=0pt,minimum size=5.4pt},
  bvertex/.style={circle,draw=XYInk,fill=white,inner sep=0pt,minimum size=5.4pt}]
  \node[anchor=west,font=\fontsize{10}{12}\selectfont\bfseries] at (-.28,3.18) {One color at a time};
  \begin{scope}[x=1.15cm,y=1.15cm]
    \foreach \x in {0,1,2} {
      \foreach \y in {0,1} {\draw[XYMuted,line width=.7pt] (\x,\y)--(\x,{\y+1});}}
    \foreach \y in {0,1,2} {
      \foreach \x in {0,1} {\draw[XYMuted,line width=.7pt] (\x,\y)--({\x+1},\y);}}
    \foreach \p in {(0,0),(0,2),(1,1)} {\draw[XYBlue,line width=2pt] (0,1)--\p;}
    \foreach \p in {(2,0),(2,2),(1,1)} {\draw[XYRust,line width=2pt] (2,1)--\p;}
    \foreach \x in {0,1,2} {\foreach \y in {0,1,2} {
      \pgfmathtruncatemacro{\parity}{mod(\x+\y,2)}
      \ifnum\parity=1 \node[avertex] at (\x,\y) {};
      \else \node[bvertex] at (\x,\y) {};\fi}}
    \node[left=5pt,font=\fontsize{7}{8}\selectfont,text=XYBlue] at (0,1) {$u_1\in A$};
    \node[right=5pt,font=\fontsize{7}{8}\selectfont,text=XYRust] at (2,1) {$u_2\in A$};
    \node[above=7pt,font=\fontsize{7}{8}\selectfont,fill=white,inner sep=1pt] at (1,1) {shared $B$ vertex};
  \end{scope}
  \node[align=center,text width=4.3cm,font=\fontsize{8}{9.5}\selectfont] at (1.15,-.57)
    {Two $A$-stars may share a vertex,\\but never an edge.};
  \node[avertex,label={[font=\fontsize{8}{9.5}\selectfont]right:$A$}] at (.35,-1.15) {};
  \node[bvertex,label={[font=\fontsize{8}{9.5}\selectfont]right:$B$}] at (1.75,-1.15) {};
  \draw[XYMuted] (3.63,-1.4)--(3.63,3.4);

  \node[anchor=west,font=\fontsize{10}{12}\selectfont\bfseries] at (4.05,3.18) {Two Jensen passes};
  \node[anchor=west,font=\fontsize{8}{9.5}\selectfont] at (4.05,2.66)
    {$h(t)=1-e^{-2\Delta t},\qquad B_\Delta(r,t)=h^{-1}(r h(t))$};
  \node[anchor=west] (first) at (4.05,1.95)
    {$\displaystyle \mathbb E_{A,B}\,C\bigl((\beta\eta_u\eta_v)_{uv}\bigr)$};
  \draw[->,XYBlue,line width=1pt] (4.24,1.60)--(4.24,.87);
  \node[anchor=west,font=\fontsize{8}{9.5}\selectfont,text=XYBlue] at (4.52,1.23)
    {average $A$ given $B$: $h\mapsto rh$};
  \node[anchor=west] (second) at (4.05,.53)
    {$\displaystyle \mathbb E_B\,C\bigl((B_\Delta(r,\beta)\eta_v)_{uv}\bigr)$};
  \draw[->,XYRust,line width=1pt] (4.24,.18)--(4.24,-.55);
  \node[anchor=west,font=\fontsize{8}{9.5}\selectfont,text=XYRust] at (4.52,-.20)
    {average $B$: $rh\mapsto r^2h$};
  \node[anchor=west] at (4.05,-.87)
    {$\displaystyle C\bigl(B_\Delta(r^2,\beta)\bigr)$};
  \node[anchor=west,font=\fontsize{8}{9.5}\selectfont] at (4.05,-1.48)
    {Each arrow is a lower bound on the preceding average.};
  \node[anchor=west,font=\fontsize{8}{9.5}\selectfont,text width=11cm,align=left] at (-.28,-2.14)
    {$uv$ always has $u\in A$, $v\in B$; all site bits are independent Bernoulli$(r)$.\\
     In the last line every edge has the displayed deterministic coupling.};
\end{tikzpicture}
}
\caption{Stars in one bipartition class have disjoint edge sets even when they
share a vertex. Sequential Jensen first averages one class and then the other,
multiplying the transformed coupling by $r$ at each pass. The arrows compare
averaged correlations, not individual disorder configurations. Here $C$ is a
fixed integer-charge cosine correlation.}
\label{fig:site}
\end{figure}

To pass to $\Z^2$, couple the disorder in all finite boxes by a single
lattice configuration. Enlarging a box first adds independent Haar
spins and then adds nonnegative edge couplings. The first operation does
not change a correlation supported in the smaller box; the second can
only increase it, by \eqref{eq:ginibre}. Thus each fixed-disorder
correlation has a limit along a free exhaustion, with values in $[0,1]$.
This is also the domain-monotonicity argument in
\cite[Remark~5]{DG25}.
Bounded convergence passes the limit through the disorder expectation
on both sides of Proposition~\ref{prop:density}.
For either model we therefore have, simultaneously for every fixed pair
$x,y\in\Z^2$,
\begin{equation}\label{eq:infinite-comparison}
 F_\star(p,\beta;x,y)
 \ge F_\star\bigl(q,B_\kappa((p/q)^\alpha,\beta);x,y\bigr),
\end{equation}
with the parameters in Theorem~\ref{thm:critical}.
The comparison temperature is independent of both box size and endpoint
separation. Consequently any uniform polynomial lower bound on its
right-hand side holds on its left-hand side with the same constants.

\section{Proof of the critical-curve conclusions}\label{sec:critical-proof}

Fix one of the two dilution models and abbreviate
$c(p)=\beta_{c,\star}(p)$ and $H=H_\kappa$.
Ginibre's inequality and the usual coupling of Bernoulli variables show
that $F_\star$ is nondecreasing in $\beta$ and $p$.
The set of temperatures in \eqref{eq:critical} is therefore upward
closed, and $c(p)$ is nonincreasing. By
\cite[Theorem~1.3, including the bond extension]{DG25}, this set is
nonempty whenever $p>p_{c,\star}$. In particular, $c(p)<\infty$
there, and every $\beta>c(p)$ belongs to the set.
Comparison with the clean model also gives
$c(p)\ge c(1)=\beta_{\mathrm{BKT}}>0$.

We show that $p^\alpha H(c(p))$ is nondecreasing. Fix
$p_{c,\star}<p\le q\le1$. If
$q^\alpha H(c(q))\ge p^\alpha$, then
\[
 p^\alpha H(c(p))\le p^\alpha\le q^\alpha H(c(q)).
\]
Otherwise, choose $\beta_q>c(q)$ decreasing to $c(q)$ such that
$q^\alpha H(\beta_q)<p^\alpha$, and put
\[
 \beta_p=H^{-1}\bigl((q/p)^\alpha H(\beta_q)\bigr).
\]
The temperature on the right of \eqref{eq:infinite-comparison}, with
initial temperature $\beta_p$, is exactly $\beta_q$.
Since $(q,\beta_q)$ has a polynomial lower bound, so does
$(p,\beta_p)$. Hence $c(p)\le\beta_p$, and taking
$\beta_q\downarrow c(q)$ gives
\begin{equation}\label{eq:order}
 p^\alpha H(c(p))\le q^\alpha H(c(q)).
\end{equation}
This use of temperatures strictly above the infimum does not require
the algebraic phase to include its critical point.

Let $p_{c,\star}<a\le p\le q\le b\le1$ and put $M=c(a)$.
The two monotonicities imply
\begin{align*}
 0\le H(c(p))-H(c(q))
 &\le \bigl(1-(p/q)^\alpha\bigr)H(c(p))\\
 &\le 1-(p/q)^\alpha
 \le \frac{\alpha}{a}(q-p).
\end{align*}
Here the last inequality uses $\alpha\in\{1,2\}$ and
$1-r^\alpha\le\alpha(1-r)$ for $r\in[0,1]$.
Since $0\le c(q)\le c(p)\le M$ and
$H'(t)\ge2\kappa e^{-2\kappa M}$ on $[0,M]$, integration gives
\[
 2\kappa e^{-2\kappa M}\bigl(c(p)-c(q)\bigr)
 \le H(c(p))-H(c(q)).
\]
This proves \eqref{eq:lipschitz} and Theorem~\ref{thm:critical}.

Finally, under \eqref{eq:weak},
$B_\kappa(p^\alpha,\beta)\ge\beta_{\mathrm{BKT}}$.
The clean theorem \cite[Theorem~1(ii) and Section~6]{vEL23} gives
$C_b(x,y)\ge1/(8\|x-y\|_\infty)$ for all
$b\ge\beta_{\mathrm{BKT}}$, including the endpoint.
Equation~\eqref{eq:infinite-comparison} with $q=1$ transfers this bound
to the diluted correlation and proves Corollary~\ref{cor:weak}.

\begin{remark}
The critical curve here is defined by the ordinary disorder average of
normalized two-point correlations. The proof does not identify it with
an exponential-decay threshold for a diluted model. Its phase-existence
input is exactly the averaged conclusion of Dario--Garban.
The finite-graph comparisons are independent of dimension; the site
proof uses bipartiteness and a uniform bound on the degree.
\end{remark}

\subsection{The susceptibility threshold}

The same order argument applies to divergence of the correlation sum.
Define, separately from \eqref{eq:critical},
\begin{equation}\label{eq:chi-critical}
 \beta_{\chi,\star}(p)
 =\inf\left\{\beta\ge0:
   \sum_{x\in\Z^2}F_\star(p,\beta;0,x)=\infty\right\}.
\end{equation}
Changing the site diagonal from $1$ to $p$ changes one finite summand
and leaves this definition unchanged.

\begin{corollary}\label{cor:chi-critical}
Theorem~\ref{thm:critical} holds with $\beta_{c,\star}$ replaced
throughout by $\beta_{\chi,\star}$. Moreover,
$\beta_{\chi,\star}(1)=\beta_{\mathrm{BKT}}$.
\end{corollary}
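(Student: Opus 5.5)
We follow the proof of Theorem~\ref{thm:critical} in Section~\ref{sec:critical-proof}, replacing the polynomial lower bound in \eqref{eq:critical} by divergence of $\sum_x F_\star(p,\beta;0,x)$. We abbreviate $c(p)=\beta_{\chi,\star}(p)$.

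\textbf{Monotonicity.} Since $F_\star$ is nondecreasing in $\beta$ and in $p$, the set in \eqref{eq:chi-critical} is upward closed, and $c$ is nonincreasing.

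\textbf{Finiteness.} A polynomial lower bound $F\ge c_0(1+|x|)^{-s}$ does not force divergence of the sum when $s>2$, so the algebraic-phase set is not obviously contained in the susceptibility set. We therefore need a decay exponent at most $2$. Dario--Garban's low-temperature theorem \cite[Theorem~1.3]{DG25} gives $F_\star(p,\beta;0,x)\ge c|x|^{-s(\beta)}$ with an exponent $s(\beta)$ tending to $0$ as $\beta\to\infty$; I expect their statement to include this, since the effective temperature of the renormalized stiffness grows. Choosing $\beta$ large enough that $s(\beta)<2$ gives a divergent sum, so $c(p)<\infty$ for $p>p_{c,\star}$. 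This is the main point to verify. If their exponent is not explicitly small, a fallback is to use Corollary~\ref{cor:weak}-type bounds of order $1/|x|$ near $p=1$ and then propagate them with \eqref{eq:infinite-comparison}. That route, however, only covers the region $p^\alpha H_\kappa(\beta)\ge H_\kappa(\beta_{\mathrm{BKT}})$, so the DG exponent is needed for the full supercritical interval.

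\textbf{Order and Lipschitz estimates.} Equation \eqref{eq:infinite-comparison} holds termwise for every $x$ with a comparison temperature independent of $x$. Summing it over $x$, divergence at $(q,\beta_q)$ implies divergence at $(p,\beta_p)$ with $\beta_p=H^{-1}((q/p)^\alpha H(\beta_q))$. The case split and the limit $\beta_q\downarrow c(q)$ then go through verbatim and give \eqref{eq:order} for $c$. The lower bound $c(p)\ge c(1)$ follows from monotonicity in $p$. The Lipschitz estimate \eqref{eq:lipschitz} and the remaining conclusions of Theorem~\ref{thm:critical} are purely algebraic consequences of these two monotonicities, using only $c(p)\le c(a)=M$, so they carry over unchanged.

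\textbf{The clean value $\beta_{\chi,\star}(1)=\beta_{\mathrm{BKT}}$.}
\begin{itemize}
\item For $\beta\ge\beta_{\mathrm{BKT}}$, the bound $C_\beta(0,x)\ge 1/(8\|x\|_\infty)$ from \cite{vEL23} gives a sum comparable to $\sum_n n\cdot n^{-1}=\infty$. Hence $c(1)\le\beta_{\mathrm{BKT}}$.
\item For $\beta<\beta_{\mathrm{BKT}}$, the clean dichotomy \cite[Section~1.2]{DG25} gives exponential decay, so the sum is finite and $c(1)\ge\beta_{\mathrm{BKT}}$.
\end{itemize}
This identification also supplies the positivity used in the Lipschitz step.
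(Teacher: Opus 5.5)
Your proposal is correct and follows the paper's proof. It gets finiteness from the Dario--Garban lower bound at large $\beta$; the paper cites their Theorem~1.3 with Remark~2 for a bound at least of order $c(p)/|x|$, which is exactly the small-exponent fact you flagged as needing verification. It then sums \eqref{eq:infinite-comparison} termwise to transport divergence, so the order and Lipschitz arguments of Theorem~\ref{thm:critical} carry over verbatim, and takes the clean value from the van Engelenburg--Lis bound at $\beta_{\mathrm{BKT}}$ together with exponential decay below it.
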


\begin{proof}
Clean exponential decay and Ginibre domination give
$\beta_{\chi,\star}(p)\ge\beta_{\mathrm{BKT}}>0$.
For every $p>p_{c,\star}$, the lower bound in
\cite[Theorem~1.3 and Remark~2, including the bond extension]{DG25}
is at least $c(p)/|x|$ at sufficiently large $\beta$, so the threshold
is finite. Divergence of a sum of nonnegative correlations is upward
closed in $\beta$ and $p$ and is preserved by
\eqref{eq:infinite-comparison}. The proof of
Theorem~\ref{thm:critical} therefore applies to this phase property as
well. At $p=1$, the clean lower bound at $\beta_{\mathrm{BKT}}$
gives equality.
\end{proof}

For either $c=\beta_{c,\star}$ or $c=\beta_{\chi,\star}$,
differentiating the two locally Lipschitz monotone functions gives
\begin{equation}\label{eq:slope}
 -\frac{\alpha(e^{2\kappa c(p)}-1)}{2\kappa p}
 \le c'(p)\le0
 \qquad\text{for almost every }p\in(p_{c,\star},1).
\end{equation}
The two thresholds are kept distinct. An unspecified polynomial lower
bound can have a summable exponent, while divergence of the sum need
not give a uniform pointwise polynomial lower bound. Their equality
requires an additional argument.

\section{Magnetic susceptibility and weak bond noise}\label{sec:physical}

The comparison has direct consequences for magnetic fluctuations in
finite boxes. A separate application to the Laplace transform controls
the possible shift of the algebraic-phase temperature under centered
bond noise. For the rest of the note set $b_0=\beta_{\mathrm{BKT}}$.

\subsection{Finite-size susceptibility}

For bond dilution let $z_x=e^{i\theta_x}$; for site dilution use the
physical occupied-site spin $z_x=\eta_x e^{i\theta_x}$. Put
$d_\bnd=1$ and $d_\sit=p$. Off-diagonal physical correlations agree
with $F_{G,\star}$ because a vacant endpoint has an independent Haar
angle. On the diagonal, $\E\av{|z_x|^2}=d_\star$.
For a finite volume $\Lambda$, define
\begin{equation}\label{eq:structure}
 S_\Lambda(0)=\frac1{|\Lambda|}\E\av{
       \left|\sum_{x\in\Lambda}z_x\right|^2},
 \qquad
 M_\Lambda=\frac1{|\Lambda|}\sum_{x\in\Lambda}z_x.
\end{equation}
The normalization is per original lattice site.

The clean input needed here is finite-volume. The proof in
\cite[Section~6]{vEL23} gives
\begin{equation}\label{eq:boundary-criterion}
 \sum_{y\in\partial G}C_{G,b}(x,y)\ge1
 \qquad\text{for }b\ge b_0
\end{equation}
for every finite domain $G$ containing $x$, with inner vertex boundary
$\partial G$. Here $C_{G,b}$ is the clean two-point function with
uniform coupling $b$ and free boundary conditions. A boundary sum below
one implies exponential decay by the Lieb--Rivasseau inequality;
continuity of each finite-domain sum excludes it at $b=b_0$ as well.
See the critical definition and its consequences in Equation~(16) of
the cited arXiv version, Equation~(17) in the published version.

\begin{proposition}[Finite-size magnetic fluctuations]\label{prop:susceptibility}
Assume
\begin{equation}\label{eq:protected}
 p^\alpha H_\kappa(\beta)\ge H_\kappa(b_0),
\end{equation}
with the bond or site parameters in Theorem~\ref{thm:critical}.
For the free square $\Lambda_R=[-R,R]^2\cap\Z^2$, with integer $R\ge1$,
\begin{equation}\label{eq:local-chi}
 \sum_{y\in\Lambda_R}\E\av{z_0\overline z_y}_{\Lambda_R}
 \ge R+d_\star.
\end{equation}
Writing $m=\lfloor R/2\rfloor$, $N=(2R+1)^2$ and
$N_{\mathrm{core}}=(2m+1)^2$, we also have
\begin{equation}\label{eq:free-chi}
 S_{\Lambda_R}(0)\ge d_\star+\frac{N_{\mathrm{core}}}{N}\,m.
\end{equation}
On an $L\times L$ square torus with integer $L\ge3$, put
$R=\lfloor(L-1)/2\rfloor$. Then
\begin{equation}\label{eq:torus-chi}
 S_L(0)\ge R+d_\star,\qquad
 \E\av{|M_L|^2}\ge\frac{R+d_\star}{L^2},\qquad
 \chi_{\mathrm{mag},L}\ge\frac\beta2(R+d_\star).
\end{equation}
Here $\chi_{\mathrm{mag},L}$ is the zero-field response of one
Cartesian magnetization component per original site to the energy
term $-h\sum_x\operatorname{Re}z_x$.
\end{proposition}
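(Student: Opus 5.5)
The plan is to reduce every claim to the clean boundary criterion \eqref{eq:boundary-criterion}, using the finite-graph comparison of Proposition~\ref{prop:density} with $q=1$. Under \eqref{eq:protected}, the comparison temperature $b=B_\kappa(p^\alpha,\beta)$ satisfies $b\ge b_0$. Off the diagonal, the physical correlations $\E\av{z_x\overline z_y}$ equal $F_{G,\star}(p,\beta;x,y)$, which is $\E C_{(\delta_x-\delta_y)}$. Hence, on any finite box or torus $G$ (the $\Z^2$ box is bipartite with $\Delta\le4$; the torus also, when $L$ is even, and I return to odd $L$ below),
\[
 \E\av{z_x\overline z_y}_G\ge C_{G,b}(x,y)\qquad(x\ne y).
\]
The diagonal term contributes exactly $d_\star$.

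For \eqref{eq:local-chi}, I decompose $\Lambda_R\setminus\{0\}$ into the concentric square shells $\partial\Lambda_k$, $k=1,\dots,R$. For each $k$, apply \eqref{eq:boundary-criterion} with $G=\Lambda_k$ and $x=0$, which gives $\sum_{y\in\partial\Lambda_k}C_{\Lambda_k,b}(0,y)\ge1$. Domain monotonicity (Ginibre, as in Section~\ref{sec:dilution}) then gives $C_{\Lambda_R,b}\ge C_{\Lambda_k,b}$. Summing over $k$ yields $R$ from the shells, and adding the diagonal term $d_\star$ gives $R+d_\star$.

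For \eqref{eq:free-chi}, I expand $S_{\Lambda_R}(0)=\frac1N\sum_{x,y}\E\av{z_x\overline z_y}$. I drop all nonnegative off-diagonal terms except those with $x$ in the core $\Lambda_m$. For such $x$, the box $x+\Lambda_m$ lies inside $\Lambda_R$, so the shell argument centered at $x$ gives at least $m$ from off-diagonal terms. The diagonal terms give $d_\star$ in total. Together these yield $d_\star+\frac{N_{\mathrm{core}}}{N}m$.

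On the torus I use translation invariance. The ball $x+\Lambda_R$ embeds into the torus as a subgraph when $2R+1\le L$. Domain monotonicity, viewing the torus as the box with extra couplings added, gives $\sum_y\E\av{z_0\overline z_y}_L\ge R+d_\star$, and averaging over $x$ gives $S_L(0)\ge R+d_\star$. The bound $\E\av{|M_L|^2}=S_L(0)/L^2$ is immediate. For the susceptibility, I differentiate with respect to $h$ at $h=0$. By rotation symmetry, the response of $\operatorname{Re}$ per site is $\frac{\beta}{L^2}\E\av{(\sum\operatorname{Re}z_x)^2}$, with $\av{\sum \operatorname{Re} z_x}=0$. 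This equals $\frac\beta2 S_L(0)$, since $\E\av{\operatorname{Re}z_x\operatorname{Re}z_y}=\frac12\operatorname{Re}\E\av{z_x\overline z_y}$ by invariance under global rotation. The normalization of the field term (whether $h$ multiplies $\beta$) should be matched to the paper's convention.

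The main obstacle is the torus step. The site comparison \eqref{eq:site} needs bipartiteness, which fails for odd $L$. My first attempt would avoid applying the comparison on the torus at all. Instead, I would first restrict to the subgraph $x+\Lambda_R$ by Ginibre monotonicity at fixed disorder: removing couplings only lowers correlations, and that subgraph is bipartite with maximum degree $4$. I would then apply the comparison there. Concretely, I will lower bound the torus disorder average by the disorder average on the embedded box and then invoke \eqref{eq:local-chi}. This handles both parities of $L$ and avoids the issue entirely. A secondary check is that the boundary criterion is applied to $\Lambda_k$ with the inner vertex boundary exactly equal to the shell $\partial\Lambda_k$.
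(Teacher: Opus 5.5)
Your proposal is correct and follows the paper's proof. You use the comparison at $q=1$ to reach the clean coupling $b=B_\kappa(p^\alpha,\beta)\ge b_0$, apply the finite-domain criterion on each disjoint shell with Ginibre enlargement, and sum over core centers. For the torus you restrict at fixed disorder to the embedded free square, which is bipartite for either parity of $L$, before invoking \eqref{eq:local-chi}, and you finish with the rotation identity $\chi_{\mathrm{mag},L}=\beta S_L(0)/2$.

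The only difference is cosmetic. In the box you apply the disorder comparison once on $\Lambda_R$ and then use clean domain monotonicity, whereas the paper applies it in each $\Lambda_r$ and enlarges the diluted correlations at fixed disorder; both orders are valid.
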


\begin{proof}
Put $b=B_\kappa(p^\alpha,\beta)\ge b_0$.
For each $1\le r\le R$, apply \eqref{eq:boundary-criterion} to
$\Lambda_r$ at coupling $b$, and then use
Proposition~\ref{prop:density} with $q=1$ to obtain
\[
 \sum_{y\in\partial\Lambda_r}
 F_{\Lambda_r,\star}(p,\beta;0,y)\ge1.
\]
Ginibre monotonicity preserves each bound when $\Lambda_r$ is enlarged
to $\Lambda_R$. The shells are disjoint, so their sum and the physical
diagonal give \eqref{eq:local-chi}; see Figure~\ref{fig:shells}.
For every $x\in\Lambda_m$, the square $x+\Lambda_m$ lies in
$\Lambda_R$. Sum the same bound over these centers, retain the diagonal
at all other centers, and use positivity of the remaining correlations.
This proves \eqref{eq:free-chi}; when $m=0$, it is just the diagonal bound.

On the torus, a free square of radius $R$ embeds around every vertex.
It is bipartite even if $L$ is odd. Apply \eqref{eq:local-chi} on this
square before adding the remaining torus edges, which increase the
correlations by Ginibre. Sum over centers to obtain the first bound in
\eqref{eq:torus-chi}. The second follows from \eqref{eq:structure}.
Rotation symmetry gives the exact identity
$\chi_{\mathrm{mag},L}=\beta S_L(0)/2$, proving the last bound.
\end{proof}

\begin{figure}[tbp]
\centering
\resizebox{.98\linewidth}{!}{%
\begin{tikzpicture}[font=\fontsize{9}{11}\selectfont,text=XYInk,>=Stealth]
  \node[anchor=west,font=\fontsize{10}{12}\selectfont\bfseries] at (-.1,5.23) {One unit from each shell};
  \begin{scope}[shift={(2.05,2.65)},x=.52cm,y=.52cm]
    \draw[step=1,XYMuted!65,line width=.3pt] (-3,-3) grid (3,3);
    \draw[XYGreen,line width=1.5pt] (-3,-3) rectangle (3,3);
    \draw[XYBlue,line width=1.5pt] (-2,-2) rectangle (2,2);
    \draw[XYRust,line width=1.5pt] (-1,-1) rectangle (1,1);
    \foreach \x in {-3,...,3} {\foreach \y in {-3,...,3} {
      \fill[XYInk!45] (\x,\y) circle (1pt);}}
    \fill[XYInk] (0,0) circle (2.8pt);
    \node[below right=3pt,font=\fontsize{8}{9.5}\selectfont,fill=white,inner sep=1pt] at (0,0) {$0$};
    \node[fill=white,inner sep=2pt,text=XYRust,font=\fontsize{7}{8}\selectfont] at (0,-1) {$\partial\Lambda_1$};
    \node[fill=white,inner sep=2pt,text=XYBlue,font=\fontsize{7}{8}\selectfont] at (0,-2) {$\partial\Lambda_2$};
    \node[fill=white,inner sep=2pt,text=XYGreen,font=\fontsize{7}{8}\selectfont] at (0,-3) {$\partial\Lambda_3$};
  \end{scope}
  \node[align=center,font=\fontsize{8}{9.5}\selectfont,text width=4.2cm] at (2.05,.37)
    {Disjoint inner boundaries\\inside a free square $\Lambda_R$};
  \draw[XYMuted] (4.6,-.05)--(4.6,5.46);
  \node[anchor=west,font=\fontsize{10}{12}\selectfont\bfseries] at (5,5.23) {The finite-volume argument};
  \node[anchor=west,font=\fontsize{8}{9.5}\selectfont] at (5,4.63)
    {Protected region: $p^\alpha H_\kappa(\beta)\ge H_\kappa(b_0)$};
  \node[anchor=west,align=left,text width=7.15cm] at (5,3.64)
    {\textbf{1. Each shell in its own box}\\[4pt]
     $\displaystyle\sum_{y\in\partial\Lambda_r}
       F_{\Lambda_r,\star}(p,\beta;0,y)\ge1$\\[4pt]
     \fontsize{8}{9.5}\selectfont Clean finite-domain criterion $+$ disorder comparison};
  \draw[->,XYBlue,line width=1pt] (5.18,2.84)--(5.18,2.29);
  \node[anchor=west,font=\fontsize{8}{9.5}\selectfont,text=XYBlue] at (5.45,2.56)
    {enlarge $\Lambda_r$ to $\Lambda_R$ by Ginibre};
  \node[anchor=west,align=left,text width=7.15cm] at (5,1.48)
    {\textbf{2. Sum shells, then add the diagonal}\\[4pt]
     $\displaystyle\sum_{y\in\Lambda_R}
       \mathbb E\langle z_0\bar z_y\rangle_{\Lambda_R}\ge R+d_*$\\[4pt]
     \fontsize{8}{9.5}\selectfont $d_*=1$ for bonds; $d_*=p$ for occupied-site spins};
  \node[anchor=west,font=\fontsize{8}{9.5}\selectfont,text width=11.9cm,align=left] at (-.1,-.78)
    {Summing the same local bound over centers yields a bulk susceptibility bound.\\
     The shell estimate is a sum over vertices; it is not a pointwise finite-box lower bound.};
\end{tikzpicture}
}
\caption{The finite-domain boundary criterion and the disorder comparison give
at least one unit from each shell in its own free square. Ginibre monotonicity
preserves these bounds in the outer square. Summing disjoint shells and adding
the physical diagonal gives \eqref{eq:local-chi}; summing over bulk centers
gives \eqref{eq:free-chi}. The shell estimate does not assert a pointwise
finite-box lower bound.}
\label{fig:shells}
\end{figure}

The finite-domain criterion is essential: the infinite-volume
$1/|x|$ lower bound does not by itself give a lower bound in a free
finite box. Proposition~\ref{prop:susceptibility} implies at least
linear growth in box side length of the averaged zero-momentum
structure factor and susceptibility in the region \eqref{eq:protected}.
It does not determine their critical exponent. For response to a
dimensionless field, the factor $\beta$ is omitted.

There is also an almost-sure bulk consequence. Let $S_\Lambda^\omega(0)$
denote \eqref{eq:structure} in a fixed iid disorder sample, without
$\E$. Fix $r$ and average the local susceptibility in translated
free squares of radius $r$ over centers in a growing free box.
Ginibre bounds $S_\Lambda^\omega(0)$ below by this spatial average,
up to the boundary fraction. The averaged local quantity is bounded,
so the ergodic theorem and \eqref{eq:local-chi} give
$\liminf_{\Lambda\uparrow\Z^2}S_\Lambda^\omega(0)\ge r+d_\star$
almost surely along squares. Intersecting the probability-one events
for integer $r$ and letting $r\to\infty$ gives bulk divergence.
This is the spatial-ergodic mechanism of
\cite[Corollary~1.4 and Section~5.3]{DG25}, applied in the explicit
region \eqref{eq:protected}. It gives neither a samplewise linear
rate nor divergence of the fixed-origin quenched susceptibility.

\subsection{Quadratic protection against centered bond noise}

Let the iid bond strengths be $J_e=J_0+\varepsilon X_e$, where
$J_0>0$, $X$ is bounded, $\E X=0$ and $v=\operatorname{Var}(X)$.
Take $|\varepsilon|$ small enough that $J_e\ge J_0/2$.
The couplings in \eqref{eq:gibbs} are $\beta J_e$.
Define $\beta_{\mathrm{alg}}(\varepsilon)$ by the polynomial-lower-bound
criterion \eqref{eq:critical} for this coupling law, and put
$T_{\mathrm{alg}}(\varepsilon)=1/\beta_{\mathrm{alg}}(\varepsilon)$
and $T_0=J_0/b_0$.

\begin{corollary}\label{cor:noise}
There is a finite constant $C$, depending on $J_0$ and the law of $X$,
such that, for all sufficiently small $|\varepsilon|$,
\begin{equation}\label{eq:noise}
 T_{\mathrm{alg}}(\varepsilon)
 \ge T_0-\frac{v}{J_0}\varepsilon^2-C|\varepsilon|^3.
\end{equation}
\end{corollary}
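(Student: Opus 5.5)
Apply Theorem~\ref{thm:random} to the iid couplings $K_e=\beta J_e=\beta(J_0+\varepsilon X_e)$, which are bounded and nonnegative. Uniformly in the finite graph and the endpoints this gives
\[
 \E C_a(\beta J)\ge C_a\bigl(b_{\mathrm{eff}}(\beta)\bigr),\qquad
 b_{\mathrm{eff}}(\beta)=-\tfrac12\log\E e^{-2\beta(J_0+\varepsilon X)}
 =\beta J_0-\tfrac12\log\E e^{-2\beta\varepsilon X}.
\]
Pass to the free infinite-volume limit exactly as in Section~\ref{sec:dilution}, via Ginibre domain monotonicity and bounded convergence. Whenever $b_{\mathrm{eff}}(\beta)\ge b_0$, the clean bound of \cite{vEL23} (the endpoint case quoted in Corollary~\ref{cor:weak}) then yields the polynomial lower bound $1/(8\|x-y\|_\infty)$ for the averaged noisy correlation. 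It follows that $\beta_{\mathrm{alg}}(\varepsilon)\le\beta^*$, where $\beta^*$ is any $\beta$ with $b_{\mathrm{eff}}(\beta)\ge b_0$.

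The next step is to expand the Laplace transform. Write $\Lambda(s)=\log\E e^{sX}$. Since $X$ is bounded and centered, $\Lambda(s)=\tfrac v2 s^2+O(|s|^3)$ uniformly for $|s|$ bounded. This gives
\[
 b_{\mathrm{eff}}(\beta)=\beta J_0-\tfrac12\Lambda(-2\beta\varepsilon)
 =\beta J_0-v\beta^2\varepsilon^2+O(\beta^3|\varepsilon|^3).
\]
We work with $\beta$ in a compact window, say $\beta\le 2b_0/J_0$, so the error is $\le C_1|\varepsilon|^3$ with $C_1$ depending on $J_0$ and $\|X\|_\infty$.

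Choose $\beta^*=(b_0+v\beta_1^2\varepsilon^2+C_1|\varepsilon|^3)/J_0$ with $\beta_1=2b_0/J_0$. For small $|\varepsilon|$ this $\beta^*$ lies in the window, and $\beta^{*2}\le\beta_1^2$. Hence $b_{\mathrm{eff}}(\beta^*)\ge b_0$. (We cannot simply take $\beta^*=b_0/J_0$, since noise lowers $b_{\mathrm{eff}}$.) A cleaner way to get the sharp constant is to solve $b_{\mathrm{eff}}(\beta)=b_0$ by the implicit function theorem. The map $\beta\mapsto b_{\mathrm{eff}}(\beta)$ is continuous and increasing in $\beta$, because $\Lambda$ is convex and Hölder gives $\tfrac{d}{d\beta}b_{\mathrm{eff}}=J_0+\varepsilon\E_{\text{tilted}}X\ge J_0/2>0$. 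So there is a unique root
\[
 \beta^*=\frac{b_0}{J_0}+\frac{v b_0^2}{J_0^3}\varepsilon^2+O(|\varepsilon|^3).
\]
Then
\[
 T_{\mathrm{alg}}(\varepsilon)\ge 1/\beta^*
 =\frac{J_0}{b_0}\Bigl(1-\frac{v b_0}{J_0^2}\varepsilon^2+O(|\varepsilon|^3)\Bigr)
 =T_0-\frac{v}{J_0}\varepsilon^2-O(|\varepsilon|^3),
\]
which is exactly \eqref{eq:noise}. Here the coefficient works out as $\frac{J_0}{b_0}\cdot\frac{vb_0}{J_0^2}=\frac v{J_0}$.

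The only delicate step is bookkeeping the cubic remainder uniformly. The constant $C$ must depend only on $J_0$ and the law of $X$, which means controlling $\Lambda'''$ on $[-4b_0\|X\|_\infty/J_0\cdot|\varepsilon|,\,4b_0\|X\|_\infty/J_0\cdot|\varepsilon|]$. This is routine since $\Lambda$ is analytic near $0$ for bounded $X$. The lower bound $J_e\ge J_0/2$ is used only to guarantee nonnegative couplings, so that Ginibre's inequality and Theorem~\ref{thm:random} apply.
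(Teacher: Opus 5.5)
Your proposal is correct and follows essentially the paper's own argument: the effective coupling $\beta J_0-\frac12\log\E e^{-2\beta\varepsilon X}$ from Theorem~\ref{thm:random}, the clean endpoint lower bound at $b_0$, local inversion of $b_{\mathrm{eff}}(\beta)=b_0$ near $b_0/J_0$, and expansion of the reciprocal. One small correction: $b_{\mathrm{eff}}$ is increasing because its $\beta$-derivative is the tilted mean of $J_0+\varepsilon X\ge J_0/2$, not because of convexity of $\Lambda$, which in fact makes $b_{\mathrm{eff}}$ concave in $\beta$.
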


\begin{proof}
Theorem~\ref{thm:random} gives the clean comparison coupling
\begin{equation}\label{eq:noise-laplace}
 \Phi(\beta,\varepsilon)
 =\beta J_0-\frac12\log\E e^{-2\beta\varepsilon X}
 =\beta J_0-\beta^2v\varepsilon^2+O(|\varepsilon|^3).
\end{equation}
Boundedness of $X$ makes the remainder uniform for $\beta$ in a compact
neighborhood of $b_0/J_0$. Since
$\partial_\beta\Phi(b_0/J_0,0)=J_0>0$, local inversion gives a
solution of $\Phi(\beta_*(\varepsilon),\varepsilon)=b_0$ satisfying
\[
 \beta_*(\varepsilon)
 =\frac{b_0}{J_0}+\frac{b_0^2v}{J_0^3}\varepsilon^2
   +O(|\varepsilon|^3).
\]
The clean endpoint lower bound implies
$\beta_{\mathrm{alg}}(\varepsilon)\le\beta_*(\varepsilon)$.
The threshold is positive by comparison with a finite clean upper
bound on the couplings. Taking reciprocals and expanding proves
\eqref{eq:noise}.
\end{proof}

Thus centered bounded noise can suppress the algebraic-phase temperature
by at most a quadratic amount in its amplitude. This is a one-sided
bound: it neither proves that suppression occurs nor determines its
actual quadratic coefficient. The effective coupling in
\eqref{eq:noise-laplace} is a comparison parameter, not an identification
of the helicity modulus.

\section{Perturbing the coupling distribution}\label{sec:law}

The thinning argument also controls changes to an entire bounded
coupling law. Let $\mu$ be supported on $[0,M]$, $M>0$, and write
$F_\mu(\beta;x,y)$ for the averaged free infinite-volume correlation
with iid couplings $\beta J_e$, $J_e\sim\mu$.
Let $c(\mu)$ denote either the algebraic threshold defined as in
\eqref{eq:critical} or the susceptibility threshold defined as in
\eqref{eq:chi-critical}, keeping the choice fixed.
For two laws set
$\delta=\|\nu-\mu\|_{\mathrm{TV}}=\sup_A|\nu(A)-\mu(A)|$.

\begin{theorem}[Regularity in the coupling law]\label{thm:law}
Suppose $0<c(\mu)<\infty$ and all competing laws $\nu$ are supported
on $[0,M]$.
If $a=\mu(\{M\})>0$, then, for
$\delta<a e^{-2Mc(\mu)}$,
\begin{equation}\label{eq:law-bounds}
 \frac{H_M(c(\mu))}{1+\delta/a}
 \le H_M(c(\nu))
 \le\frac{H_M(c(\mu))}{1-\delta/a}<1.
\end{equation}
In particular $c$ is locally Lipschitz in total variation at $\mu$,
and, with $\mu$ fixed,
\begin{equation}\label{eq:law-expansion}
 |c(\nu)-c(\mu)|
 \le\frac{e^{2Mc(\mu)}-1}{2Ma}\,\delta+O(\delta^2).
\end{equation}
More generally, if $\operatorname{ess\,sup}\mu=M$, then $c$ is
continuous at $\mu$ in total variation among laws on $[0,M]$,
without requiring an atom at $M$.
\end{theorem}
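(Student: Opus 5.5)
The plan is to reduce a change of coupling law to a Bernoulli thinning of the reference law, followed by a uniform change of temperature. Stochastic domination handles the thinning step and Lemma~\ref{lem:convex} with $\kappa=M$ handles the temperature change. Throughout, $F_\nu(\beta)\ge F_\mu(\beta')$ for all $x,y$ transfers phase membership from $(\mu,\beta')$ to $(\nu,\beta)$, for either threshold. This is the same order argument as in Section~\ref{sec:critical-proof}.

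\emph{Step 1 (thinned comparison).} Let $\mu_r=r\mu+(1-r)\delta_0$ be $\mu$ thinned with retention $r$. I claim $F_{\mu_r}(\beta)\ge F_\mu(B_M(r,\beta))$. Write the coupling as $\beta J_e\zeta_e$ with $J_e\sim\mu$ and $\zeta_e\sim$ Bernoulli$(r)$ independent. Condition on $J$ and regard $t_e=\beta\zeta_e$ as the parameter on edge $e$ with weight $w_e=J_e\le M$. Lemma~\ref{lem:convex} with $\kappa=M$ makes $C_a$ separately convex in $H_M(t_e)$. Conditional Jensen, as in Theorem~\ref{thm:random}, then replaces each $t_e$ by $H_M^{-1}(rH_M(\beta))=B_M(r,\beta)$. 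Averaging over $J$ and passing to the free-volume limit as in Section~\ref{sec:dilution} gives the claim.

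\emph{Step 2 (tail comparison and stochastic domination).} If $\nu(J>s)\ge\mu_r(J>s)$ for all $s$, a monotone coupling and Ginibre monotonicity \eqref{eq:ginibre} give $F_\nu\ge F_{\mu_r}$ pointwise in the disorder, after averaging. Take $r=1-\delta/a$. For $s<M$ we have $\mu(J>s)\ge a$, so
\[
\nu(J>s)\ge\mu(J>s)-\delta\ge(1-\delta/a)\,\mu(J>s).
\]
For $s\ge M$ both tails vanish. Hence $F_\nu(\beta)\ge F_\mu(B_M(1-\delta/a,\beta))$. Taking $\beta$ with $B_M(1-\delta/a,\beta)\downarrow c(\mu)$ gives $(1-\delta/a)H_M(c(\nu))\le H_M(c(\mu))$, which is the upper bound in \eqref{eq:law-bounds}. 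The hypothesis $\delta<ae^{-2Mc(\mu)}$ is exactly what makes the right side less than $1$, so that $\beta$ is finite.

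For the lower bound, exchange the roles of the two laws but keep the atom of $\mu$. For $s<M$,
\[
\nu(J>s)\le\mu(J>s)+\delta\le(1+\delta/a)\,\mu(J>s),
\]
so $\mu(J>s)\ge r'\nu(J>s)$ with $r'=(1+\delta/a)^{-1}$. Steps 1--2 applied to $\nu$ (only $\nu\le M$ is used) give $F_\mu(\beta)\ge F_\nu(B_M(r',\beta))$, hence $H_M(c(\mu))\le H_M(c(\nu))(1+\delta/a)$. The expansion \eqref{eq:law-expansion} follows by inverting $H_M$ near $c(\mu)$, using $H_M'(c)=2Me^{-2Mc}$ and $1-H_M(c)=e^{-2Mc}$.

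\emph{Step 3 (no atom).} This is the main obstacle, because $\mu(J>s)$ is no longer bounded below up to $s=M$. I would fix $M'<M$ with $a'=\mu(J>M')>0$, which is possible since $\operatorname{ess\,sup}\mu=M$.

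For the upper bound on $c(\nu)$, cap $\nu$ at $M'$; by Ginibre this only decreases correlations. The tail argument on $s<M'$ with $a'$ in place of $a$ compares $\nu\wedge M'$ with the thinned law of $\mu\wedge M'$. Since $J\wedge M'\ge(M'/M)J$, we get $c(\mu\wedge M')\le (M/M')c(\mu)$, and this tends to $c(\mu)$ as $M'\to M$.

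For the lower bound, dominate $\nu$ by $(M/M')$ times a thinning-inverse of $\mu\wedge M'$. Equivalently, compare $\mu$ with $(M'/M)\nu$ scaled and use $r'=(1+\delta/a')^{-1}$ on $s<M'$; the tails above $M'$ are absorbed by the rescaling.

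Sending first $\delta\to0$ and then $M'\to M$ gives continuity. The bookkeeping in this rescaling step is where I expect the care to be needed.
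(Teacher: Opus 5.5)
Your proposal is correct and follows the paper's proof: the thinning comparison $F_{\operatorname{law}(\zeta J)}(\beta)\ge F_\mu(B_M(r,\beta))$ (your $B_M$ is the paper's $\psi_M$), two-sided tail domination with retentions $1-\delta/a$ and $(1+\delta/a)^{-1}$ using the top atom, and rescaling by $M/M'$ (the paper's $\lambda$) in the atomless case. The only variation is that you get the thinning step directly from Lemma~\ref{lem:convex} with $\kappa=M$ and weight $w_e=J_e$, whereas the paper applies Theorem~\ref{thm:random} to $\beta\eta_eJ_e$ and then uses concavity of $B_1(r,\cdot)$. Two small steps remain: separate the trivial case $H_M(c(\nu))\ge r'$ in the lower bound, and reapply the estimates with nearby laws as reference to get a uniform local Lipschitz constant.
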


For example, the Lipschitz conclusion holds around Bernoulli($p$)
for every $p>p_{c,\bnd}$, and allows arbitrary nearby laws on $[0,1]$.
An atom at zero is permitted. The common upper cap in the theorem is
part of the hypothesis.

\subsection{Thinning bounded strengths}

Define
\begin{equation}\label{eq:psi-U}
 \psi_M(r,t)=\frac1M B_1(r,Mt)
 =-\frac1{2M}\log(1-rH_M(t)),\qquad
 U_M(r,t)=-\frac1{2M}\log(1-H_M(t)/r),
\end{equation}
where $U_M(r,t)$ is used only when $r>H_M(t)$.
Conditioning on $J$ and thinning by independent Bernoulli($r$)
variables $\eta_e$, Theorem~\ref{thm:random} replaces
$\beta\eta_eJ_e$ by $B_1(r,\beta J_e)$.
For $0<r<1$,
\[
 \partial_t^2B_1(r,t)
 =-\frac{2r(1-r)e^{-2t}}{(1-r+re^{-2t})^2}\le0.
\]
Since $B_1(r,0)=0$, concavity gives
$B_1(r,\beta J_e)\ge(J_e/M)B_1(r,\beta M)$.
Ginibre and the free-volume limit therefore imply
\begin{equation}\label{eq:law-thinning}
 F_{\operatorname{law}(\eta J)}(\beta;x,y)
 \ge F_\mu(\psi_M(r,\beta);x,y).
\end{equation}
The cases $r=0,1$ follow directly.

If $\nu$ stochastically dominates $\operatorname{law}(\eta J)$,
product coupling and \eqref{eq:law-thinning} give
\begin{equation}\label{eq:law-threshold}
 c(\nu)\le U_M(r,c(\mu))
 \quad\text{when }r>H_M(c(\mu)).
\end{equation}
Indeed, first take $t>c(\mu)$ with $H_M(t)<r$ and apply the
comparison at $U_M(r,t)$; then let $t\downarrow c(\mu)$.
This avoids assuming critical attainment.

\subsection{Proof of Theorem~\ref{thm:law}}

Suppose first that $a=\mu(\{M\})>0$.
Set $r=1-\delta/a$ and $s=(1+\delta/a)^{-1}$.
For $0<t\le M$, the tail $T_\mu(t)=\mu([t,M])$ is at least $a$,
so total variation gives
\[
 T_\nu(t)\ge T_\mu(t)-\delta\ge rT_\mu(t),\qquad
 T_\nu(t)\le T_\mu(t)+\delta\le T_\mu(t)/s.
\]
Thus $\nu$ dominates Bernoulli($r$) thinning of $\mu$, while
$\mu$ dominates Bernoulli($s$) thinning of $\nu$.
The hypothesis $\delta<a e^{-2Mc(\mu)}$ implies
$r>H_M(c(\mu))$, so \eqref{eq:law-threshold} makes $c(\nu)$ finite
and proves the upper bound in \eqref{eq:law-bounds}.
For the lower bound, if $H_M(c(\nu))\ge s$, then
$H_M(c(\nu))\ge sH_M(c(\mu))$ immediately.
Otherwise apply \eqref{eq:law-threshold} in the reverse direction
to obtain $H_M(c(\mu))\le H_M(c(\nu))/s$.
Inverting the bounds and expanding gives \eqref{eq:law-expansion}.
A sufficiently small TV neighborhood retains a top atom bounded below
and thresholds bounded above. Applying these estimates with any law
in that neighborhood as reference gives a uniform pairwise Lipschitz
constant there.

Now assume only $\operatorname{ess\,sup}\mu=M$.
Fix $\lambda>1$ and let $a_\lambda=\mu([M/\lambda,M])>0$.
For $\delta<a_\lambda$, put
$r=1-\delta/a_\lambda$ and $s=(1+\delta/a_\lambda)^{-1}$.
Tail comparison yields
\begin{equation}\label{eq:scaled-tails}
 \nu\succeq\operatorname{law}(\operatorname{Bern}(r)J_\mu/\lambda),
 \qquad
 \operatorname{law}(\lambda J_\mu)
 \succeq\operatorname{law}(\operatorname{Bern}(s)J_\nu).
\end{equation}
For the first inequality, when $t\le M/\lambda$ use
$T_\nu(t)\ge T_\mu(t)-\delta\ge rT_\mu(t)\ge rT_\mu(\lambda t)$;
above $M/\lambda$ the target tail vanishes.
For the second, when $t\le M$ use
$T_\nu(t)\le T_\mu(t/\lambda)+\delta\le T_\mu(t/\lambda)/s$;
above $M$ the target tail vanishes.

The scaling identity
$c(\operatorname{law}(\lambda J_\mu))=c(\mu)/\lambda$
and \eqref{eq:law-threshold}, with the respective support caps,
give, for $\delta$ small enough that $r>H_M(c(\mu))$,
\begin{equation}\label{eq:law-continuity}
 \psi_M(s,c(\mu)/\lambda)
 \le c(\nu)\le\lambda U_M(r,c(\mu)).
\end{equation}
The upper bound uses cap $M/\lambda$ for $J_\mu/\lambda$.
For the lower bound use cap $M$ for $J_\nu$, separating the trivial
case $H_M(c(\nu))\ge s$ as above.
Letting $\delta\to0$ at fixed $\lambda$ gives bounds
$c(\mu)/\lambda$ and $\lambda c(\mu)$; then let $\lambda\downarrow1$.
This proves continuity.

The finiteness hypothesis holds, for example, whenever
$\mu((0,M])>p_{c,\bnd}$. Indeed, choose $a>0$ with
$\mu([a,M])>p_{c,\bnd}$ and compare with strength-$a$ Bernoulli
bonds, using \cite[Theorem~1.3 and Remark~2]{DG25}.
Positivity follows from clean domination at strength $M$.

\subsection{Disorder with a fixed dependence range}

Independence can also be weakened when the goal is stability near the
clean model. Call an occupation field $\ell$-dependent if families
indexed by sets at distance greater than $\ell$ are independent.
For bond indexing one may use the lattice of doubled edge midpoints.

\begin{corollary}\label{cor:finite-range}
Fix $\ell<\infty$ and $\beta>b_0$.
There is $\widehat p(\ell,\beta)<1$ such that every
$\ell$-dependent bond-occupation field on $\Z^2$ whose marginals
are all at least $\widehat p$ satisfies
\[
 \E\av{\cos(\theta_x-\theta_y)}_{\beta,\omega}
 \ge\frac1{8\|x-y\|_\infty}\qquad(x\ne y).
\]
The analogous statement holds for site occupation.
Neither stationarity nor positive association is required.
\end{corollary}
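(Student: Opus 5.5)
The plan is to compare the $\ell$-dependent field with an independent Bernoulli field, and then apply Corollary~\ref{cor:weak} in the bond case with $(\alpha,\kappa)=(1,1)$.

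\textbf{Step 1: choose the target density.} Since $\beta>b_0$ and $H_1$ is continuous and increasing, there is $p_0<1$ with $p_0H_1(\beta)\ge H_1(b_0)$. For every $p\ge p_0$, Corollary~\ref{cor:weak} gives the bound $1/(8\|x-y\|_\infty)$ for the iid Bernoulli($p$) bond model at inverse temperature $\beta$, in the free infinite-volume limit.

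\textbf{Step 2: stochastic domination.} By the Liggett--Schonmann--Stacey theorem, for every $\ell$ there is $\widehat p(\ell,\beta)<1$ with the following property. Every $\ell$-dependent $\{0,1\}$ field on the edge-midpoint lattice with marginals at least $\widehat p$ stochastically dominates an iid Bernoulli($p_0$) product field. The dependency graph has degree bounded in terms of $\ell$ only. The LSS domination needs neither stationarity nor positive association, and this is where the last sentence of the statement comes from. We therefore obtain a coupling $\omega\ge\omega'$ with $\omega'$ iid Bernoulli($p_0$).

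\textbf{Step 3: transfer the correlation bound.} Pass to finite boxes using the free exhaustion, under the same coupling. Ginibre monotonicity in the couplings gives
\[
\langle\cos(\theta_x-\theta_y)\rangle_{\beta,\omega}\ge\langle\cos(\theta_x-\theta_y)\rangle_{\beta,\omega'}
\]
pointwise in the coupling, in every box and hence in the limit. Taking expectations yields the claim. The infinite-volume correlation for $\omega$ is understood as the monotone free limit, which exists samplewise by the argument of Section~\ref{sec:dilution}.

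\textbf{Site case.} Apply LSS to the $\ell$-dependent site field, obtaining domination of iid Bernoulli($p_0'$) sites. Here $p_0'$ is chosen so that $(p_0')^2H_4(\beta)\ge H_4(b_0)$. The induced couplings $\beta\eta_u\eta_v$ are monotone in $\eta$, so Ginibre and the site case of Corollary~\ref{cor:weak} conclude.

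The main obstacle is Step 2. We must check that the LSS constant depends only on $\ell$ through the maximal degree of the dependency graph, so that $\widehat p$ is uniform over all fields. We must also make sure the result applies to non-stationary fields with only lower bounds on the marginals. LSS is stated for exactly that setting: marginals at least $p$ and a bounded-degree dependency graph. Failing a direct citation, the fallback is to reprove it via the sequential Lovász-local-lemma-type conditional bound $P(\omega_e=1\mid\text{past})\ge p_0$.
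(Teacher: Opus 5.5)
Your proposal is correct and follows the paper's proof. You pick a density at which Corollary~\ref{cor:weak} applies, then use the Liggett--Schonmann--Stacey theorem on the doubled edge-midpoint indexing to dominate an iid Bernoulli field, with a threshold depending only on $\ell$ and $\beta$; finally you transfer the bound by Ginibre monotonicity in the couplings, with $\beta\eta_u\eta_v$ monotone in $\eta$ for sites. The paper chooses $\rho$ with a strict inequality, whereas your $p_0H_1(\beta)\ge H_1(b_0)$ is non-strict; this is harmless because Corollary~\ref{cor:weak} includes the equality case.
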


\begin{proof}
Choose $\rho<1$ with
$\rho^\alpha H_\kappa(\beta)>H_\kappa(b_0)$.
The Liggett--Schonmann--Stacey theorem \cite{LSS97}, in the formulation
of \cite[Theorem~2.9]{DG25}, gives a marginal threshold below one
that ensures stochastic domination of iid Bernoulli($\rho$).
For bonds, index horizontal and vertical edges by
$(2x+1,2y)$ and $(2x,2y+1)$, respectively. They form a subset of
$\Z^2$, covered by that theorem, with a fixed dependence range.
Ginibre and Corollary~\ref{cor:weak} transfer the iid lower bound.
\end{proof}

For nonnegative strengths, the same argument applies through
$J_e\ge a\,1_{\{J_e\ge a\}}$ when $\beta a>b_0$ and
$\inf_e\mathbb P(J_e\ge a)$ is sufficiently close to one.
The dependence range must remain fixed. High marginals alone are
insufficient: a uniformly shifted periodic grid of deleted separating
bonds can leave only finite square components while each bond's
retention probability tends to one.

\section{Further questions}\label{sec:questions}

The support restriction in Theorem~\ref{thm:law} leaves a concrete
extension problem. If $\nu_n\to\mu$ in total variation, all supported
on a fixed $[0,M]$ but $M>\operatorname{ess\,sup}\mu$, must
$c(\nu_n)\to c(\mu)$ at a finite positive threshold?
The tail proof above fails where the reference law has no upper-tail
mass. Continuity under weak convergence, equivalently $W_1$ convergence
on this common compact support, is a further question.

A local quantitative problem concerns the star estimate. For a full
star with interaction $tS$, no other interaction at its center, and
arbitrary nonnegative exterior couplings, Lemma~\ref{lem:convex}
gives
\[
 f(\beta)-f(0)
 \le\frac{e^{2\Delta\beta}-1}{2\Delta}\,f'(\beta).
\]
Determining the optimal uniform coefficient, or improving its
large-$\beta$ order, would sharpen the site-dilution comparison.
The inequality should include observables with $f'(\beta)=0$.

For supercritical Bernoulli bond dilution in \eqref{eq:protected},
a different question is whether
\[
 \sum_x C_\omega(\beta;0,x)=\infty
 \quad\text{almost surely conditional on }0\in\mathcal C_\infty,
\]
where $C_\omega$ is the fixed-disorder free infinite-volume correlation
and $\mathcal C_\infty$ is the unique infinite open cluster.
The almost-sure bulk statement in Section~\ref{sec:physical} does not
imply this fixed-origin conclusion. A uniform quenched lower bound
$c_\omega/(1+\|x\|_\infty)$ on infinite-cluster vertices, with
$c_\omega>0$, would suffice by the cluster's positive asymptotic density.

Finally, define
\[
 \beta_{\exp,\star}(p)=\sup\left\{\beta\ge0:
 \begin{array}{l}
 \text{there exist }A,a>0\text{ such that}\\[-2pt]
 F_\star(p,\beta;0,x)\le Ae^{-a\|x\|_\infty}
 \quad\text{for every }x\ne0
 \end{array}\right\}.
\]
Equality of $\beta_{\exp,\star}$, $\beta_{\chi,\star}$ and
$\beta_{c,\star}$ would establish a diluted sharpness statement.
The present comparisons do not supply a diluted counterpart of the
clean finite-size criterion that links these three definitions.

\end{document}